\documentclass{amsart}
\usepackage{amsmath,amsthm,amssymb}
\usepackage{mathrsfs}

\title[Characterizations of some Hartogs domains]{An application of a Diederich-Ohsawa theorem in characterizing some Hartogs domains}

\author{Hyeseon Kim and Atsushi Yamamori}

\address{The Center for Geometry and its Applications, Pohang University of Science and Technology, Pohang 790-784, Republic of Korea}

\email{hop222@snu.ac.kr, hyeseon@postech.ac.kr}

\address{The Center for Geometry and its Applications, Pohang University of Science and Technology, Pohang 790-784, Republic of Korea}

\email{yamamori@postech.ac.kr, ats.yamamori@gmail.com}

\subjclass[2000]{Primary 32M17; Secondary 32A25, 32A36.}

\keywords{automorphism groups, Bergman kernels, Bergman spaces.}

\thanks{The research of the authors was supported by an NRF grant $2011$-$0030044$~(SRC-GAIA) of the Ministry of Education, The Republic of Korea.}

\theoremstyle{definition}
\newtheorem{theorem}{Theorem}[section]
\newtheorem{lemma}[theorem]{Lemma}

\newtheorem{corollary}{Corollary}
\newtheorem{remark}{Remark}

\newtheorem{example}{Example}[section]

\begin{document}
\begin{abstract}
Applying a theorem due to Diederich and Ohsawa on weighted Bergman kernels,
we characterize some Hartogs domains by their holomorphic automorphisms.
\end{abstract}

\maketitle

\section{Introduction}
Let $\Omega$ be a domain in $\mathbb{C}^n$ and denotes its automorphism group by $\mathrm{Aut}(\Omega)$, the set of all biholomorphic self maps that forms a group under the composition law. A celebrated theorem due to Riemann states that every simply connected domain $\Omega\subsetneq\mathbb C$ is holomorphically equivalent to the unit disc $\mathbb D=\{|z|<1\}$. It is natural to ask whether Riemann mapping theorem can hold in $\mathbb C^n$ for $n\geq2$. As a monumental result to this question, Poincar\'{e} showed the following in analyzing structures of automorphism groups: \emph{The unit ball and the polydisk in $\mathbb C^{n}$ cannot be holomorphically equivalent}. 

It indicates the difficulty of the holomorphic equivalence problem of higher dimensional cases. However, because of the biholomorphic invariance of the automorphism group, an accurate knowledge of $\mathrm{Aut}(\Omega)$ may enhance in understanding this problem.

In~\cite{D-O}, Diederich and Ohsawa considered a moment problem for weighted Bergman kernels which arises from classical issues in analysis; namely as an intermediate procedure of approaching the problem, they examined it to find out a suitable subclass of admissible weights $\varphi$ such that the corresponding weighted Bergman kernel $K_{\varphi}$ characterizes the weight $\varphi$ uniquely within this subclass. One such a subclass treated in~\cite{D-O}, concerns the complex polynomial ring $\mathbb C[z]$~(see Theorem~\ref{thm:D-O}). It is worthy of close attention that their result is (unexpectedly) connected to characterization problems of Hartogs domains.

The purpose of this paper is to characterize some Hartogs domains from the viewpoint of their automorphisms. The mappings in their automorphism groups about which we shall investigate, essentially interpret the weighted Bergman kernels of their base domains, as well. These associated weighted Bergman kernels invoke the moment problem by Diederich-Ohsawa, as a main ingredient of our approach in characterizing Hartogs domains.

This paper is organized as follows: In Section~\ref{pre} we first review the notion of the weighted Bergman kernel and then explain some basics of Hartogs domains in conjunction with a primary idea to demonstrate a methodological perspective in~\cite{D-O}. Besides, typical examples of weighted Bergman spaces are provided; precisely, the Fock-Bargmann space and weighted Bergman spaces on irreducible bounded symmetric domains. 

In Section~\ref{sec:Fock} we investigate the Fock-Bargmann-Hartogs domain, an unbounded strongly pseudoconvex domain which is non-hyperbolic in the sense of Kobayashi. Then, we establish Theorem~\ref{main1} as a main result, using Theorem~\ref{thm:D-O} and an explicit form~\cite{KTY} of its automorphism group. What is more, an alternative proof of Theorem~\ref{main1} is given in Remark~\ref{PL_condition} under a certain additional condition pertinent to a Phragm{\'e}n-Lindel{\"o}f type theorem for unbounded domains. 

As the second main result of this paper, we also give an analogue of Theorem~\ref{main1} to the Cartan-Hartogs domain~(see~Theorem~\ref{main2} in Section~\ref{CH_dom}).

In Section~\ref{C_Rmk} we close this paper by addressing, a remark on our argument.

\medskip

\section{Preliminaries}\label{pre}
Let $D$ be a domain in $\mathbb C^n$ and $p$ a positive measurable function on $D$.
The weighted Bergman space $A^2 (D, p)$ is the space of holomorphic functions in $L^2(D, p)$:
$$A^2 (D, p) := \left\lbrace f \in \mathcal O(D): \int_D |f(z)|^2  p(z)dV(z) < \infty \right\rbrace.$$
The weighted Bergman space $A^2 (D, p)$ is a Hilbert space with the following inner product:
$$ \langle f,g \rangle = \int_D f(z)\overline{g(z)} p(z)dV(z).$$
The function $p$ is called a weight of integration.
A weight of integration is called admissible if
\begin{itemize}
\item[(i)] $A^2(D, p)$ is a closed subspace of $L^2(D,p)$,
\item[(ii)] the evaluation functional $r_z: A^2(D, p) \rightarrow \mathbb C$ defined by $r_z(f)=f(z)$, is bounded linear.
\end{itemize}
These conditions ensure the existence of the reproducing kernel of $A^2(D, p) $.
Throughout this paper, we always assume that a weight of integration is admissible.
For further information of admissible weights, see \cite{PW}.
The reproducing kernel of $A^2(D, p) $ is called the weighted Bergman kernel of $D$ and denote it by $K_{D,p}$.
Namely, the kernel function $K_{D,p}$ is the unique function satisfying the reproducing property
\[
 f(z)= \int_D f(w) K(z,w) p(z)dV(w), \quad \mbox{for all } f \in A^2(D,p).
\]
Let $\{e_k\}_{k\geq 0}$ be a complete orthonormal basis of $A^2 (D, p)$. Then the weighted Bergman kernel is computed by a complete orthonormal basis:
\begin{align*}
K_{D,p}(z,w)=\sum_{k \geq 0} e_k (z) \overline{e_k (w)},
\end{align*}
which is independent of the choice of complete orthonormal basis. Especially, if $p \equiv 1$, then weighted Bergman kernel is called the (unweighted) Bergman kernel and denote it by $K_D$.
Let $F: D \rightarrow D'$ be a biholomorphism.
The following transformation law of the Bergman kernel is fundamental:
\begin{align}\label{trans}
K_D (z,w)= J(F,z)\overline{J(F,w)} K_{D'} (F(z),F(w)).
\end{align}
Here, $J(F,z)$ is the Jacobian determinant
$$ J(F,z) = \det \left(  \dfrac{\partial F_j}{\partial z_k} (z)\right)_{j,k=1,\ldots, n}.$$

Define the Hartogs domain $\Omega_p^D$ by
$$\Omega_p^D =\{ (z,\zeta)\in D \times \mathbb C^m: \|\zeta\|^2 < p(z) \}.$$
Then it is known that the Bergman kernel of $\Omega_p^D$ has a series expansion involving weighted Bergman kernels of its base domain $D$:
\begin{align}\label{FRC}
K_{\Omega_p^D} ( (z,\zeta), (z',\zeta')) = \dfrac{1}{\pi^m} \sum_{k \geq 0} (k+1)_m K_{D, p^{ k+m}} (z,z') \langle \zeta,\zeta' \rangle^k.
\end{align}
Specifically, the restriction of the Bergman kernel $K_{\Omega_p^D}$ to the space $\{ \zeta=\zeta'=0\}$ coincides with the weighted Bergman kernel $K_{D, p^{m}}$ (up to a constant multiple);
namely,
\begin{align}\label{restFRC}
K_{\Omega_p^D} ( (z,0), (z',0)) = \dfrac{m!}{\pi^m}  K_{D, p^{m}} (z,z').
\end{align}
Since the formula \eqref{FRC} was first proved by Forelli and Rudin \cite{F-R} for the case $D=\mathbb D$ and $p(z)=1-|z|^2$, it is called the Forelli-Rudin construction.
Note that Ligocka proved the Forelli-Rudin construction for general pairs $(D,p)$ in \cite{Ligocka}. 
We will see that the equation \eqref{restFRC} plays a substantial r\^{o}le in the proof of our main results.
\begin{remark}
Some generalizations of the Forelli-Rudin construction are known.
Engli\v{s} \cite{Englis2000} obtained a Forelli-Rudin type formula for the domain
$$\Omega_{\psi,\phi}=\left\lbrace (z, \zeta_1, \zeta_2)\in \Omega\times \mathbb C^{d_1} \times \mathbb C^{d_2}: \dfrac{\|\zeta_1\|^2 }{\psi(z)} + \dfrac{\|\zeta_2\|^2 }{\phi(z)}  <1 \right\rbrace. $$
Englis and Zhang \cite{Englis2006} gained a Forelli-Rudin type formula for the domain
$$\Omega_{\mathscr D}= \left\lbrace (z,\zeta) \in \Omega \times \mathbb C^d : \psi (z)^{-1/2} \zeta \in \mathscr D \right\rbrace, $$
where $\mathscr D$ is an irreducible bounded symmetric domain in $\mathbb C^d$.
Another generalization of the Forelli-Rudin construction is also achieved by the second author~\cite{Y-Proc}; precisely, a main object is in exploring the following domain
$$ \Omega_{P, \psi} = \left\lbrace (z,\zeta)\in \Omega \times \mathbb C^d:  P(|\zeta_1|^2, \ldots, |\zeta_d|^2)< \psi(z)  \right\rbrace,$$
with the quasi-homogeneity
$$ \lambda P(x_1, \ldots, x_d)=
 P(\lambda^{\alpha_1}x_1, \ldots, \lambda^{\alpha_d}x_d ),$$
where $\lambda \geq 0$ and $\alpha_1, \ldots, \alpha_d$ are some positive real numbers.
\end{remark}
The followings are examples of weighted Bergman kernels.
\begin{example}\label{Fock}
Consider the Fock-Bargmann space defined by
\[
\mathscr F_{\mu}^2:=\left\lbrace  f \in \mathcal O(\mathbb{C}^n): \dfrac{\mu^n}{\pi^n}\int_{\mathbb{C}^n}  |f(z)|^2 e^{-\mu \|z\|^2}dV(z) < \infty  \right\rbrace.
\] 
The reproducing kernel $K_\mu$ of $\mathscr F_\mu^2$ is called the Fock-Bargmann kernel.
It is known that the Fock-Bargmann kernel has an explicit form:
\begin{align*}
K_\mu(z,w)=   e^{\mu \langle z,w \rangle}. 
\end{align*}
For the computation of the Fock-Bargmann kernel, see \cite{Hall} and \cite{Zhu-Book}.
\end{example}
\begin{example}
Let $\mathscr D$ be an irreducible bounded symmetric domain and $N$ its generic norm.
Then the following integral formula is known:
\[
\int_{\mathscr D} N(z,z)^\mu dV(z)= \dfrac{\chi(0)}{\chi(\mu)}\mbox{Vol}(\mathscr D).
\]
Here, $\chi$ is a polynomial which is called the Hua polynomial \cite[Eq.~(4)]{D}.
For simplicity of the notation we put $c_{\mathscr D, \mu}=\frac{\chi(0)}{\chi(\mu)} \mbox{Vol}(\mathscr D)$.
Consider the weighted Bergman space
\[
\mathscr S_\mu^2:= \left\lbrace  f \in \mathcal O(\mathscr D): \int_{\mathscr D}  |f(z)|^2c_{\mathscr D, \mu}^{-1}  N(z,z) dV(z) < \infty  \right\rbrace.
\]
Then it is known that the reproducing kernel $\widetilde{K}_\mu$ of the weighted Bergman space $\mathscr S_\mu^2$ has a form (cf. \cite[Lemme 1]{D}):
$$\widetilde K_{\mu}(z,w)= N(z,w)^{-g-\mu},$$
where $g$ denotes the genus of $\mathscr D$. For example, if $\mathscr D$ is the unit disc $\mathbb D=\{|z|<1\}$, then $N(z,w)=1-z\overline{w}$ and $g=2$. We refer the reader to \cite{Ar} and \cite[Part V]{Book} for the details concerning Jordan triple systems.
\end{example}
We finish this section with a theorem due to Diederich and Ohsawa \cite{D-O} and further remarks below.
\begin{theorem}\label{thm:D-O}
Let $D$ be a bounded domain in $\mathbb C^n$ and $\varphi, \psi$ admissible weights such that $\mathbb C[z] \subset A^2(D,\varphi) \cap A^2(D,\psi)$.
Suppose that the identity
\begin{align}\label{cond:OD}
K_{D, \varphi}(z,w) = K_{D, \psi}(z,w)
\end{align}
holds on $D \times D$. Then one has $\varphi = \psi$ on $D$.
\end{theorem}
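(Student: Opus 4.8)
The plan is to exploit the fact that a reproducing kernel determines its Hilbert function space \emph{together with its inner product}, and then to reduce the identity $\varphi=\psi$ to a classical moment problem. First I would invoke the uniqueness part of the Moore--Aronszajn theorem: a positive-definite kernel on a set determines a unique reproducing kernel Hilbert space, meaning that both the underlying space of functions and its inner product are recovered from the kernel. Since $A^2(D,\varphi)$ and $A^2(D,\psi)$ are admissible (so their evaluation functionals are bounded and their reproducing kernels exist) and share the same kernel $K:=K_{D,\varphi}=K_{D,\psi}$ by the hypothesis \eqref{cond:OD}, they must coincide as Hilbert function spaces: the same holomorphic functions carrying the same inner product. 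In particular the two inner products agree on any common element, and no interchange-of-integration argument is needed to see this.

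Next, because $\mathbb{C}[z]\subset A^2(D,\varphi)\cap A^2(D,\psi)$, every polynomial lies in this common space, so for all $P,Q\in\mathbb{C}[z]$ one has
\[
\int_D P(z)\overline{Q(z)}\,\varphi(z)\,dV(z)=\langle P,Q\rangle_\varphi=\langle P,Q\rangle_\psi=\int_D P(z)\overline{Q(z)}\,\psi(z)\,dV(z).
\]
Taking $P(z)=z^\alpha$ and $Q(z)=z^\beta$ shows that $\varphi$ and $\psi$ possess identical mixed moments, namely $\int_D z^\alpha\overline{z}^\beta\varphi\,dV=\int_D z^\alpha\overline{z}^\beta\psi\,dV$ for every pair of multi-indices $\alpha,\beta$. (One could instead reach this moment equality more computationally, by applying the reproducing property to polynomials against each kernel and then integrating the resulting identity a second time; but that route forces one to justify a Fubini interchange whose absolute integrability is delicate, since $\int_D K(z,z)\varphi(z)\,dV(z)=\dim A^2(D,\varphi)=\infty$. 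The Moore--Aronszajn argument sidesteps this entirely.)

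Finally I would close the argument through the moment problem. Taking $P=Q=1$ gives $\int_D\varphi\,dV=\int_D\psi\,dV<\infty$, so $(\varphi-\psi)\,dV$ extends by zero to a finite signed Borel measure on the compact set $\overline{D}$, and its integral against every monomial $z^\alpha\overline{z}^\beta$ vanishes. The span of these monomials is a conjugation-closed, unital, point-separating subalgebra of $C(\overline{D})$, hence dense by the Stone--Weierstrass theorem; a finite measure annihilating a dense subspace of $C(\overline{D})$ is zero. Thus $(\varphi-\psi)\,dV=0$, i.e.\ $\varphi=\psi$ almost everywhere on $D$. The main obstacle is precisely this last passage from equal moments to $\varphi=\psi$: it is here that the hypotheses are used decisively, for boundedness of $D$ provides the compactness of $\overline{D}$ required by Stone--Weierstrass (equivalently, the determinacy of the moment problem), while the integrability guaranteed by $1\in A^2(D,\varphi)\cap A^2(D,\psi)$ ensures a genuinely finite measure. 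For unbounded $D$ this determinacy can fail, which explains the role of boundedness in the statement.
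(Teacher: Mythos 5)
Your proof is correct, and in substance it follows the very skeleton on which the paper relies: the paper does not actually prove Theorem~\ref{thm:D-O} (it cites Diederich--Ohsawa), but its subsequent Remark discloses the structure of that proof, namely (i) the moment identity $\int_D(\varphi-\psi)P\overline{Q}\,dV=0$ for all $P,Q\in\mathbb{C}[z]$, followed by (ii) the passage to continuous test functions, with the boundedness of $D$ used exactly at the Stone--Weierstrass step --- which is precisely where you use it. What you add is a clean derivation of step (i), which the paper leaves entirely to the citation: the Moore--Aronszajn uniqueness of the reproducing kernel Hilbert space gives $A^2(D,\varphi)=A^2(D,\psi)$ with identical inner products, and your observation that this sidesteps a Fubini interchange is a legitimate point of care, since $\int_D K(z,z)\varphi(z)\,dV(z)=\dim A^2(D,\varphi)=\infty$ (the space contains $\mathbb{C}[z]$), so absolute integrability for the naive double-integration route is genuinely unavailable. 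Your endgame differs only cosmetically from the paper's: you extend $(\varphi-\psi)\,dV$ to a finite signed measure on the compact set $\overline{D}$ and test it against all of $C(\overline{D})$ via Stone--Weierstrass and Riesz representation, whereas the paper's Remark phrases (ii) with $h\in C_0^0(D)$ and, in its $\mathbb{C}^n$ variant, approximates on $\mathrm{supp}(h)$ and invokes density of $C_0^\infty$ in $L^1$; both yield the same conclusion, and your discussion of why compactness (equivalently, determinacy of the moment problem) fails for unbounded $D$ matches the paper's own remark on the role of boundedness. One minor point: since admissible weights are only assumed measurable, your conclusion ``$\varphi=\psi$ almost everywhere'' is the correct reading of the theorem's ``$\varphi=\psi$ on $D$''.
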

\begin{remark}
Let $c$ be a non-zero constant. If we replace the condition \eqref{cond:OD} by $K_{D, \varphi}(z,w) = c K_{D, \psi}(z,w)$,
then we can derive that $\varphi = c \psi$ with obvious modifications of the argument given in \cite{D-O}.
\end{remark}
\begin{remark}
In the proof of the above theorem, the boundedness of $D$ is only used to prove the implication $(i)\Rightarrow(ii)$:
\begin{enumerate}
\item[$(i)$] $\int_D (\varphi(z)- \psi(z)) P(z) \overline{Q(z)} dV(z)=0$ for any $P, Q \in \mathbb C [z]$;
\item[$(ii)$] $\int_D (\varphi(z)- \psi(z)) h(z) dV(z)=0$ for any $h \in C_0^0(D)$.
\end{enumerate}
A remarkable fact is that the previous theorem also holds for
$D=\mathbb C^n$ and admissible weights $\varphi, \psi$ on $\mathbb C^n$:
Since
\[\int_{\mathbb C^n} (\varphi(z)- \psi(z)) P(z) \overline{Q(z)} dV(z)=0, \quad \mbox{for any } P, Q \in \mathbb C [z],
\]
it follows from the complex Stone-Weierstrass theorem that
there exist real polynomials $P_h(z,\overline z)$ and $Q_h(z,\overline z)$ such that
\begin{align*}
&\left| \int_{\mathbb C^n} (\varphi(z)-\psi(z)) h(z)  dV(z)\right|\\
&=\left| \int_{\mathbb C^n} (\varphi(z)-\psi(z)) (h(z)-P_h(z,\overline{z})-iQ_h(z,\overline{z}))  dV(z)\right|\\
&=\left| \int_{K} (\varphi(z)-\psi(z)) (h(z)-P_h(z,\overline{z})-iQ_h(z,\overline{z}))  dV(z)\right|\\
&\leq \left(\sup_{K} \left|h(z)- P_h(z,\overline{z}) -i Q_h(z,\overline{z}) \right|\right) \| \varphi - \psi\|_{L^1(K)}\\
&< \varepsilon \| \varphi - \psi\|_{L^1(K)},
\end{align*}
for all $\varepsilon>0$ and $h \in C_0^\infty (\mathbb C^n)$. Here, $K:=\mbox{supp}(h)$ and $\varphi, \psi\in L^{1}(\mathbb C^n)$. Moreover, since $C_0^\infty (\mathbb C^n)$ is dense in $L^1(\mathbb C^n)$ and $\varepsilon >0$ is arbitrary, we deduce that $\varphi-\psi \in \left(C_0^\infty(\mathbb C^n)\right)^{\perp}=\{0\} $.
This clearly forces $\varphi=\psi$ on $\mathbb C^n$.
\end{remark}
\section{Fock-Bergmann-Hartogs domain}\label{sec:Fock}
Define the Fock-Bargmann-Hartogs domain $D_{n,m}$ by
\[
D_{n,m}=\{ (z,\zeta) \in \mathbb C^n \times \mathbb C^m : \|\zeta\|^2 < e^{-\mu\|z\|^2} \} , \quad \mu>0.
\]
The Fock-Bargmann-Hartogs domain is an unbounded strongly pseudoconvex domain.
In particular, since $\{(z,0)\in \mathbb C^{n}\times\mathbb C^{m}\}\subset D_{n,m}$, it is non-hyperbolic in the sense of Kobayashi. In~~\cite{KTY}, an explicit description of the automorphism group $\mbox{Aut}(D_{n,m})$ is given as follows.
\begin{theorem}
The automorphism group $\mbox{Aut}(D_{n,m})$ is generated by the following mappings:
\begin{align*}
\varphi_U&: (z,\zeta) \mapsto (Uz, \zeta) , \quad U \in U(n);\\
\varphi_{U'}&: (z,\zeta) \mapsto (z, U'\zeta) , \quad U' \in U(m);\\
\varphi_{v}&: (z,\zeta) \mapsto (z-v, e^{\mu \langle z,v \rangle- \frac{\mu}{2} \|v\|^2 } \zeta)  ,\quad v\in \mathbb C^n.
\end{align*}
\end{theorem}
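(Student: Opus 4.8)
The plan is to first confirm that each of the three listed families lies in $\mathrm{Aut}(D_{n,m})$ and then to prove that they exhaust the group. Membership is a direct computation: writing $\rho(z,\zeta)=\|\zeta\|^2e^{\mu\|z\|^2}$, so that $D_{n,m}=\{\rho<1\}$, a short calculation gives $\rho\circ\varphi_U=\rho\circ\varphi_{U'}=\rho\circ\varphi_v=\rho$; in particular each map preserves the defining inequality and is an automorphism. The substantive content is the converse, for which I would take an arbitrary $F\in\mathrm{Aut}(D_{n,m})$ and successively normalize it by composing with the listed generators until it is forced into the identity.

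The first step is to locate a biholomorphically invariant set on which $F$ must act. I would show that the \emph{core} $\Sigma:=\{(z,0):z\in\mathbb{C}^n\}$ is precisely the union of the images of all non-constant entire curves $\mathbb{C}\to D_{n,m}$. Indeed, if $\varphi=(a,b):\mathbb{C}\to D_{n,m}$ is entire then $\|b(t)\|^2<e^{-\mu\|a(t)\|^2}\le 1$, so each component of $b$ is a bounded entire function, hence constant by Liouville's theorem; were that constant non-zero, then $\|a(t)\|^2$ would be bounded and $a$ constant too, forcing $\varphi$ constant. Thus every non-constant entire curve lies in $\Sigma$, while conversely $\Sigma\cong\mathbb{C}^n$ is swept out by affine complex lines. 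Since this characterization is biholomorphically invariant, any $F\in\mathrm{Aut}(D_{n,m})$ satisfies $F(\Sigma)=\Sigma$, and $\tilde f:=F|_\Sigma$ is an automorphism of $\Sigma\cong\mathbb{C}^n$.

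Next I would pin down $\tilde f$ using the Bergman kernel. By the restriction formula \eqref{restFRC} together with the Fock-Bargmann kernel of Example~\ref{Fock}, the kernel of $D_{n,m}$ along $\Sigma$ is a constant multiple of $e^{m\mu\langle z,z'\rangle}$. Restricting the transformation law \eqref{trans} to $\Sigma\times\Sigma$ and writing $J(z):=J(F,(z,0))$, which is holomorphic and nowhere vanishing, yields the functional equation $e^{m\mu\langle z,z'\rangle}=J(z)\overline{J(z')}\,e^{m\mu\langle\tilde f(z),\tilde f(z')\rangle}$. Applying $\partial_{z_j}\partial_{\overline{z'_k}}$ annihilates the $J$-factors and leaves $\delta_{jk}=\sum_l\partial_{z_j}\tilde f_l(z)\,\overline{\partial_{z'_k}\tilde f_l(z')}$ for all $z,z'$; setting $z=z'$ shows the Jacobian matrix of $\tilde f$ is unitary, and the full identity then forces it to be constant. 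Hence $\tilde f(z)=Uz+b$ with $U\in U(n)$ and $b\in\mathbb{C}^n$, and by composing $F$ with a suitable $\varphi_{U^{-1}}$ and $\varphi_v$ I may assume $F(z,0)=(z,0)$ for all $z$.

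It remains to show that a normalized $F$ with $F|_\Sigma=\mathrm{id}$ has the form $(z,\zeta)\mapsto(z,U'\zeta)$ with $U'\in U(m)$, and this transverse analysis is where I expect the main difficulty. Writing $F=(f,g)$ with $f(z,0)=z$ and $g(z,0)=0$, I would feed $F$ back into the transformation law \eqref{trans} for the \emph{full} kernel and exploit the Forelli-Rudin expansion \eqref{FRC}: matching the terms homogeneous of each bidegree in $(\zeta,\overline{\zeta'})$ should successively force the fibers $\{z=z_0\}$ to be preserved and the fiberwise action to be linear and unitary, independent of $z$. Equivalently, comparing the diagonal blow-up rate $K_{D_{n,m}}((z,\zeta),(z,\zeta))\asymp(1-\rho)^{-(n+m+1)}$ as $\rho\to 1$ under \eqref{trans} should yield $\rho\circ F=\rho$, that is $\|g\|^2e^{\mu\|f\|^2}=\|\zeta\|^2e^{\mu\|z\|^2}$, whose only holomorphic solutions compatible with $F|_\Sigma=\mathrm{id}$ are the asserted maps. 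Controlling these higher-order terms — and in particular ruling out a nontrivial off-diagonal block $\partial_\zeta f(z,0)$ mixing the base and fiber directions — is the technical heart of the argument.
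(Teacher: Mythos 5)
First, a point of comparison: the paper does not prove this theorem at all --- it is quoted from \cite{KTY} --- so your outline can only be measured against the argument there and on its own merits. Your first two steps are correct and essentially match the known approach: the Liouville-type characterization of $\Sigma=\{\zeta=0\}$ as the union of images of nonconstant entire curves does give $F(\Sigma)=\Sigma$ for every $F\in\mathrm{Aut}(D_{n,m})$, and restricting the transformation law \eqref{trans} to $\Sigma\times\Sigma$, where by \eqref{restFRC} the kernel is a constant multiple of $e^{m\mu\langle z,z'\rangle}$, does force $\tilde f(z)=Uz+b$ with $U\in U(n)$. (One small imprecision: $\partial_{z_j}\partial_{\overline{z'_k}}$ does not annihilate the $J$-factors in the functional equation itself; you must first pass to local logarithms, which is legitimate since $J$ is nonvanishing, and then the mixed derivative kills $\log J(z)+\overline{\log J(z')}$.)

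The genuine gap is your third step, which you yourself flag as ``the technical heart'': it is a plan, not a proof, and neither of the two routes you sketch is sound as stated. For route (b), the diagonal asymptotics are wrong as written: summing \eqref{FRC} on the diagonal gives $K_{D_{n,m}}((z,\zeta),(z,\zeta))\asymp e^{m\mu\|z\|^2}(1-\rho)^{-(n+m+1)}$, with a base-dependent factor you dropped; and even with the corrected asymptotics, extracting $\rho\circ F=\rho$ from \eqref{trans} requires controlling $|J(F,\cdot)|^2$ and $e^{m\mu(\|z\|^2-\|f\|^2)}$ near the boundary, which is essentially the statement you are trying to prove. Moreover, even granting $\|g\|^2e^{\mu\|f\|^2}=\|\zeta\|^2e^{\mu\|z\|^2}$, your final claim that the only solutions with $F|_\Sigma=\mathrm{id}$ are $(z,\zeta)\mapsto(z,U'\zeta)$ is merely asserted: matching the bidegree-$(1,1)$ terms does show $\partial_\zeta g(z,0)$ is unitary for each $z$, hence constant by Liouville, but killing the off-diagonal block $\partial_\zeta f(z,0)$ requires working through the higher bidegrees, and you do not do this. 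The standard way to close the gap --- and the route of \cite{KTY} --- is different: use your step 2 together with $\varphi_v$ to reduce to an automorphism fixing the origin, then invoke a Cartan-type linearity theorem for circular domains, which applies here because \eqref{FRC} with $\zeta'=0$ shows $K_{D_{n,m}}((z,\zeta),(0,0))$ is a nonvanishing constant, so Bergman representative coordinates at the origin are globally defined and linearize $F$; finally, one checks directly that the linear self-maps of $D_{n,m}$ are exactly the block-diagonal maps in $U(n)\times U(m)$. Without that (or a completed version of your expansion-matching), your argument is incomplete at precisely its hardest point.
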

Let $p$ be an admissible weight function on $\mathbb C^n$.
In this section, we consider the Hartogs domain
$$D_{p}=\{(z,\zeta)\in \mathbb C^n \times \mathbb C^m: \|\zeta\|^2 < p(z) \}$$
such that the Bergman space $A^2(D_{p})$ is a reproducing kernel Hilbert space.
The aim of this section is to show that 
the Fock-Bargmann-Hartogs domain $D_{n,m}$ is the unique domain
which admits $\varphi_{v}$ as an automorphism.

We begin with our investigation with the following simple computational lemma.
\begin{lemma}\label{jacobian}
The Jacobian determinant $J(\varphi_{v}, (z,0))$ is given by
\begin{align*}
J(\varphi_{v}, (z,0))=
\det\begin{pmatrix}
I_{n} & O\\
B & k_v(z) I_m
\end{pmatrix}=k_v(z)^m, \quad B \in \mbox{Mat}_{m\times n} (\mathbb C),
\end{align*}
where $k_w(z)=K_\mu(z,w)/\sqrt{K_\mu (w,w)}$ is the normalized Fock-Bargmann kernel.
\end{lemma}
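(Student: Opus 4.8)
The plan is to compute the full $(n+m)\times(n+m)$ complex Jacobian matrix of $\varphi_v$ directly from its defining formula and then exploit its block-triangular shape. Writing $\varphi_v=(F,G)$ with $F(z,\zeta)=z-v\in\mathbb C^n$ and $G(z,\zeta)=e^{\mu\langle z,v\rangle-\frac{\mu}{2}\|v\|^2}\zeta\in\mathbb C^m$, I would organize the Jacobian into the four blocks $\partial F/\partial z$, $\partial F/\partial\zeta$, $\partial G/\partial z$, and $\partial G/\partial\zeta$.

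Three of these blocks are immediate. Since $F$ depends only on $z$ and is affine, $\partial F/\partial z=I_n$ and $\partial F/\partial\zeta=O$. For the last block, differentiating $G_l=e^{\mu\langle z,v\rangle-\frac{\mu}{2}\|v\|^2}\zeta_l$ in $\zeta$ gives $\partial G/\partial\zeta=e^{\mu\langle z,v\rangle-\frac{\mu}{2}\|v\|^2}I_m$. The only block whose precise entries require care is $\partial G/\partial z$; however, each of its entries carries a factor $\zeta_l$ (coming from differentiating the exponential prefactor times $\zeta_l$), so evaluating at $(z,0)$ makes this block vanish. Denoting its value by $B$ (which is in fact zero, though the argument does not rely on this), the Jacobian at $(z,0)$ takes precisely the block lower-triangular form displayed in the statement.

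Next I would identify the scalar $e^{\mu\langle z,v\rangle-\frac{\mu}{2}\|v\|^2}$ with the normalized Fock-Bargmann kernel. By Example~\ref{Fock} one has $K_\mu(z,w)=e^{\mu\langle z,w\rangle}$, so
\[
k_v(z)=\frac{K_\mu(z,v)}{\sqrt{K_\mu(v,v)}}=\frac{e^{\mu\langle z,v\rangle}}{e^{\frac{\mu}{2}\|v\|^2}}=e^{\mu\langle z,v\rangle-\frac{\mu}{2}\|v\|^2},
\]
which is exactly the prefactor appearing in $G$. Finally, applying the standard formula for the determinant of a block triangular matrix yields $\det(I_n)\cdot\det\bigl(k_v(z)I_m\bigr)=k_v(z)^m$, completing the computation.

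Since every step is an elementary differentiation or a direct substitution, there is no genuine obstacle here; the only point worth flagging is the observation that $\partial G/\partial z$ vanishes along $\zeta=0$, which is what decouples the $z$- and $\zeta$-directions and renders the precise value of $B$ irrelevant to the determinant.
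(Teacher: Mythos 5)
Your proof is correct and follows essentially the same route as the paper: both identify the exponential prefactor with the normalized kernel $k_v(z)$ via the explicit formula $K_\mu(z,w)=e^{\mu\langle z,w\rangle}$ and then evaluate the block lower-triangular Jacobian determinant at $(z,0)$. Your additional observation that the block $B=\partial G/\partial z$ actually vanishes along $\zeta=0$ is a fine refinement, but, as you note yourself, it is immaterial to the determinant, and the paper accordingly leaves $B$ unspecified.
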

\begin{proof}
By using an explicit form of the Fock-Bargmann kernel given in Example \ref{Fock}, we know that
the normalized kernel $k_w(z)=K_\mu(z,w)/\sqrt{K_\mu (w,w)}$ has a form:
\begin{align*}
k_w(z)= e^{\mu \langle z,w \rangle - \frac{\mu}{2} \|w\|^2}.
\end{align*}
In particular, if $z=w$, then one has
\begin{align*}
k_z(z)&= e^{\mu \|z\|^2 - \frac{\mu}{2} \|z\|^2}\\
&=  K_{\frac{\mu}{2}} (z,z).
\end{align*}
Thus the mapping $\varphi_{v}$ can be rewritten as
\begin{align*}
\varphi_{v}&: (z,\zeta) \mapsto (z-v, k_v(z) \zeta) .
\end{align*}
Moreover, the Jacobian determinant $J(\varphi_{v}, (z,0))$ has a form
\begin{align*}
J(\varphi_{v}, (z,0))=
\det\begin{pmatrix}
I_{n} & O\\
B & k_v(z) I_m
\end{pmatrix}=k_v(z)^m, \quad B \in \mbox{Mat}_{m\times n} (\mathbb C),
\end{align*}
which completes the proof.
\end{proof}
In the following, for simplicity of the notation, we put $\mathscr{H}_m=A^2(\mathbb C^n, p(z)^m)$.
The next lemma is the key of our argument.
\begin{lemma}\label{keylemma}
Suppose that $\mbox{Aut}(D_{p})$ contains the mapping $\varphi_{v}$.
Then we have the following:
\begin{itemize}
\item[(a)] The weighted Bergman kernel $K_{\mathbb C^n,p^m}$ coincides with the Fock-bargmann kernel $K_{m\mu}$ (up to a constant multiple);
\item[(b)] The space $\mathscr H_m$ coincides with the Fock-Bargmann $\mathscr F_{m\mu}^{2}$.
\end{itemize}
\end{lemma}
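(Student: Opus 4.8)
The plan is to turn the hypothesis ``each $\varphi_v$ is an automorphism of $D_p$'' into a functional equation for the weighted Bergman kernel $K_{\mathbb C^n,p^m}$, and then to solve that equation. First I would apply the transformation law \eqref{trans} to the biholomorphism $\varphi_v\colon D_p\to D_p$ and evaluate both arguments on the slice $\{\zeta=\zeta'=0\}$. Since $\varphi_v(z,0)=(z-v,0)$, Lemma~\ref{jacobian} supplies $J(\varphi_v,(z,0))=k_v(z)^m$, while the Forelli--Rudin restriction \eqref{restFRC} rewrites every value $K_{D_p}((\cdot,0),(\cdot,0))$ as a constant multiple of $K_{\mathbb C^n,p^m}$. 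Cancelling the common constant $m!/\pi^m$ then yields
\begin{equation*}
K_{\mathbb C^n,p^m}(z,w)=k_v(z)^m\,\overline{k_v(w)^m}\,K_{\mathbb C^n,p^m}(z-v,w-v),\qquad v\in\mathbb C^n.
\end{equation*}
Inserting the explicit form $k_v(z)=e^{\mu\langle z,v\rangle-\frac{\mu}{2}\|v\|^2}$ from Lemma~\ref{jacobian}, the prefactor equals $e^{m\mu(\langle z,v\rangle+\langle v,w\rangle)-m\mu\|v\|^2}$.

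The decisive step is to specialize $v=w$, which is legitimate precisely because the hypothesis provides $\varphi_w$ for every $w$. The exponent then collapses to $m\mu\langle z,w\rangle$ and the shifted kernel becomes $K_{\mathbb C^n,p^m}(z-w,0)$, giving
\begin{equation*}
K_{\mathbb C^n,p^m}(z,w)=e^{m\mu\langle z,w\rangle}\,K_{\mathbb C^n,p^m}(z-w,0).
\end{equation*}
Writing $h(u):=K_{\mathbb C^n,p^m}(u,0)$, which is holomorphic in $u$, this says $K_{\mathbb C^n,p^m}(z,w)=e^{m\mu\langle z,w\rangle}h(z-w)$.

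To conclude (a) I would play off the two holomorphy types against each other. The kernel $K_{\mathbb C^n,p^m}(z,w)$ and the factor $e^{m\mu\langle z,w\rangle}$ are both anti-holomorphic in $w$, so $h(z-w)$ must be anti-holomorphic in $w$; but $h$ is holomorphic and $w\mapsto z-w$ is holomorphic, so $h(z-w)$ is also holomorphic in $w$. A function simultaneously holomorphic and anti-holomorphic on the connected set $\mathbb C^n$ is constant in $w$, forcing $h\equiv c$ and hence
\begin{equation*}
K_{\mathbb C^n,p^m}(z,w)=c\,e^{m\mu\langle z,w\rangle}=c\,K_{m\mu}(z,w),
\end{equation*}
with $c=K_{\mathbb C^n,p^m}(0,0)>0$ since $A^2(\mathbb C^n,p^m)$ is a nontrivial reproducing kernel Hilbert space. (Equivalently, a short Wirtinger computation shows that $\partial_{w_j}$ kills the right-hand side only when $\partial_j h\equiv0$.) For (b) I would then invoke the uniqueness part of the Moore--Aronszajn theorem: a positive-definite kernel determines its reproducing kernel Hilbert space uniquely. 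Since $\mathscr H_m$ and $\mathscr F_{m\mu}^2$ carry the proportional kernels $c\,K_{m\mu}$ and $K_{m\mu}$, they contain exactly the same holomorphic functions, so $\mathscr H_m=\mathscr F_{m\mu}^2$ as function spaces (their inner products differing only by the factor $c$).

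The one genuinely delicate point, and the step I expect to be the main obstacle, is the bookkeeping in the derivation of the functional equation: one has to check that $\varphi_v$ restricts correctly to $\{\zeta=\zeta'=0\}$, that the Jacobian factor from Lemma~\ref{jacobian} and the Forelli--Rudin constant from \eqref{restFRC} combine consistently, and that the family hypothesis really licenses the substitution $v=w$ for all $w$. Once the equation is established, solving it is immediate from the clash of holomorphy types described above; note that Theorem~\ref{thm:D-O} is not needed here---it enters only afterwards, when one passes from equality of kernels to equality of weights.
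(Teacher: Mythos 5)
Your proof is correct, and it runs on the same skeleton as the paper's: the transformation law \eqref{trans} applied to $\varphi_v$ on the slice $\{\zeta=\zeta'=0\}$, the Jacobian from Lemma~\ref{jacobian}, the reduction to $K_{\mathbb C^n,p^m}$ via \eqref{restFRC}, and the decisive substitution of the translation parameter equal to the evaluation point. The differences lie in the endgame. The paper works only on the diagonal: it sets $w=z$, then $v=z$, obtains $K_{\mathbb C^n,p^m}(z,z)=\mathrm{const.}\,K_{m\mu}(z,z)$, and concludes by the standard fact that a sesqui-holomorphic kernel is determined by its values on the diagonal. You instead keep both arguments free, set $v=w$ to get $K_{\mathbb C^n,p^m}(z,w)=e^{m\mu\langle z,w\rangle}h(z-w)$ with $h(u)=K_{\mathbb C^n,p^m}(u,0)$, and force $h$ to be constant by the holomorphic/anti-holomorphic clash in $w$; this is a valid substitute that avoids invoking the polarization principle, at the cost of a slightly longer computation. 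Your bookkeeping is in fact the cleaner one: the factors $m!/\pi^m$ from \eqref{restFRC} appear on both sides of the transformed identity and cancel, as you say (the paper's displayed intermediate identity retains this factor, which is immaterial since only the unspecified constant multiple is affected). For part (b), the paper defers to the argument in Zhu's paper and omits details, whereas your Moore--Aronszajn uniqueness argument --- the RKHS of $cK_{m\mu}$ is $\mathscr F_{m\mu}^2$ as a function space, with inner product rescaled by $c^{-1}$ --- is a complete, self-contained replacement. Your closing observation that Theorem~\ref{thm:D-O} is not used in the lemma itself but only afterwards, in passing from equality of kernels to equality of weights in Theorem~\ref{main1}, matches the paper's structure exactly.
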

\begin{proof}
Using Lemma \ref{jacobian} and the transformation formula \eqref{trans} for $F=\varphi_{v}$, we have
\begin{align*}
K_{D_p}((z,0),(z,0))&= |k_v(z)^m|^2 K_{D_p}(\varphi_{v}(z,0) ,\varphi_{v}(z, 0) )\\
&=|k_v(z)^m|^2 K_{D_p}((z-v,0) , (z-v,0) ).
\end{align*}
Adopting \eqref{restFRC}, we obtain
\begin{align*}
K_{\mathbb C^n,p^m} (z,z)= \dfrac{ m! |k_v(z)^m|^2 K_{\mathbb C^n,p^m}(z-v,z-v)}{ \pi^m}.
\end{align*}
If $z=v$, then we have
\begin{align*}
K_{\mathbb C^n,p^m} (z,z)&=\dfrac{m! K_{\mathbb C^n,p^m}(0,0) |k_z(z)^m|^2}{\pi^m}\\
&= \dfrac{m! K_{\mathbb C^n,p^m}(0,0) K_{m\mu} (z,z)}{\pi^m}.
\end{align*}
Since the weighted Bergman kernel $K_{\mathbb C^n,p^m}$ is determined by its value on the diagonal,
we get
\begin{align*}
K_{\mathbb C^n,p^m} (z,w)= \dfrac{m! K_{\mathbb C^n,p^m}(0,0) K_{m\mu} (z,w)}{\pi^m}.
\end{align*}
Since $K_{\mathbb C^n,p^m} \not \equiv 0$, $K_{\mathbb C^n,p^m}(0,0) $ must be a non-zero constant.\par
The proof of the implication $(\mathrm a) \Rightarrow (\mathrm b)$
proceeds along similar lines as in \cite[p.472]{Zhu}.
To avoid the repetition, we omit the details.
\end{proof}

Now we are ready to prove the following theorem.
\begin{theorem}\label{main1}
Up to a constant multiple of $p$, the Fock-Bargmann-Hartogs domain is the unique Hartogs domain $D_{p}$ which attains $\varphi_{v}$ as an automorphism.
\end{theorem}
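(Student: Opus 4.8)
The plan is to reduce Theorem~\ref{main1} to the uniqueness statement of Theorem~\ref{thm:D-O}, exploiting the kernel identity already harvested in Lemma~\ref{keylemma}. So I would start by assuming that $\mathrm{Aut}(D_p)$ contains the mapping $\varphi_{v}$. Lemma~\ref{keylemma}(a) then supplies a nonzero constant $c$ with
\[
K_{\mathbb C^n,p^m}(z,w)=c\,e^{m\mu\langle z,w\rangle},
\]
that is, $K_{\mathbb C^n,p^m}$ is a constant multiple of the Fock--Bargmann kernel $K_{m\mu}$.

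Next I would bring in the model weight $q(z)=e^{-\mu\|z\|^2}$ that defines the Fock--Bargmann--Hartogs domain $D_{n,m}$, and observe that it produces the \emph{same} kernel up to a constant. Indeed, $q^m(z)=e^{-m\mu\|z\|^2}$ differs from the weight $\tfrac{(m\mu)^n}{\pi^n}e^{-m\mu\|z\|^2}$ of $\mathscr F_{m\mu}^2$ only by a positive constant, so the behaviour of reproducing kernels under a constant rescaling of the weight yields $K_{\mathbb C^n,q^m}(z,w)=c'\,e^{m\mu\langle z,w\rangle}$ for some $c'\neq0$. Comparing the two expressions gives
\[
K_{\mathbb C^n,p^m}(z,w)=\frac{c}{c'}\,K_{\mathbb C^n,q^m}(z,w)
\]
on all of $\mathbb C^n\times\mathbb C^n$.

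I would then apply Theorem~\ref{thm:D-O} in its extension to $D=\mathbb C^n$ and in the constant-multiple form, both supplied by the remarks following that theorem. The hypothesis $\mathbb C[z]\subset A^2(\mathbb C^n,p^m)\cap A^2(\mathbb C^n,q^m)$ is met because Lemma~\ref{keylemma}(b) identifies $A^2(\mathbb C^n,p^m)$ with the Fock--Bargmann space $\mathscr F_{m\mu}^2$, which contains every polynomial, and the same holds for $q^m$. The theorem then forces the two weights to agree up to a constant, $p(z)^m=\lambda\,e^{-m\mu\|z\|^2}$ for some $\lambda>0$. Since $p$ is a positive weight, taking the positive $m$-th root gives $p(z)=\lambda^{1/m}e^{-\mu\|z\|^2}$, so $p$ is a constant multiple of the Fock--Bargmann--Hartogs weight; rescaling the $\zeta$-variable by $\lambda^{-1/(2m)}$ then biholomorphically identifies $D_p$ with $D_{n,m}$, which is the assertion.

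I expect the main obstacle to be the passage from the equality of weighted Bergman kernels to the equality of weights on the \emph{unbounded} domain $\mathbb C^n$, where the boundedness of $D$ invoked in Theorem~\ref{thm:D-O} is unavailable. This is exactly the situation addressed by the remark that replaces the compactness step with the density of $C_0^\infty(\mathbb C^n)$ in $L^1(\mathbb C^n)$ together with the complex Stone--Weierstrass theorem, and the care lies in combining that extension with the constant-multiple refinement so that the conclusion $p^m=\lambda q^m$ is legitimate before extracting roots.
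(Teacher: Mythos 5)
Your proposal is correct and follows essentially the same route as the paper: Lemma~\ref{keylemma} supplies both the kernel identity $K_{\mathbb C^n,p^m}=c\,K_{m\mu}$ and the identification $\mathscr H_m=\mathscr F_{m\mu}^2$ (whence $\mathbb C[z]\subset A^2(\mathbb C^n,p^m)$), and the conclusion $p=\mathrm{const.}\,e^{-\mu\|z\|^2}$ is drawn from Theorem~\ref{thm:D-O} together with its two remarks (the constant-multiple version and the extension to $D=\mathbb C^n$). Your write-up merely makes explicit some steps the paper leaves implicit, namely introducing the model weight $q(z)=e^{-\mu\|z\|^2}$, checking the polynomial-containment hypothesis for both weights, and extracting the $m$-th root at the end.
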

\begin{proof}
Suppose that $\mbox{Aut}(D_{p})$ contains the mapping $\varphi_{v}$.
Then it follows from Lemma \ref{keylemma} that $\mathscr H_m=\mathscr F_{m\mu}^{2}$ and
$K_{\mathbb C^n,p^m} (z,w)= c K_{m\mu} (z,w)$. Since $\mathbb C[z]\subset\mathscr F_{m\mu}^{2}$, we conclude that $p(z)= \mbox{const.} e^{-\mu\|z\|^2}$ by using Theorem \ref{thm:D-O} and its consecutive remarks.
\end{proof}
\begin{remark}\label{PL_condition}
In the case when $p(z)e^{\mu \|z\|^2}$ satisfies conditions on $\overline{D_{p}}$ for which a Phragm{\'e}n-Lindel{\"o}f type theorem holds, the maximum principle~(cf.~\cite{BMT, PL} for holomorphic or plurisubharmonic functions) gives an alternative proof of Theorem \ref{main1}:
Let $(z,\zeta) \in \partial D_p$, that is, $\|\zeta\|^2 =p(z)$. Indeed, we have
\begin{align}\label{ineq}
p(z)\|k_v(z)\|^2= \|\zeta\|^2\|k_v(z)\|^2 =\|k_v(z) \zeta\|^2 \leq p(z-v),
\end{align}
for any $(z,\zeta)\in \partial D_p$ and $v \in \mathbb C^n$, albeit $\varphi_v (\partial D_p)\neq\partial D_p$ generically.
Letting $v=z$ in \eqref{ineq}, one gets that 
\[p(z) \| k_z(z)  \|^2 \leq p(0), \quad  \mbox{for any } (z,\zeta)\in \partial D_p.\]
Since $(0,0) \in D_p$, by adopting the maximum principle, we obtain
\[
p(z) \|k_z (z)\|^2 = p(z) e^{\mu \|z\|^2} \equiv p(0), \quad \mbox{for any } (z,\zeta)\in D_p,
\] as desired.
\end{remark}
\section{Cartan-Hartogs domain}\label{CH_dom}
Let $\mathscr{D}$ be an irreducible bounded symmetric domain in $\mathbb C^n$ and $N$ its generic norm.
We next consider the following domain
$$ \Omega^{\mathscr{D}}_N = \left\lbrace (z,\zeta)\in \mathscr{D} \times \mathbb C^m: \|\zeta\|^2 < N(z,z)^\mu \right\rbrace, \quad \mu>0,$$
which is called the Cartan-Hartogs domain. This domain was first introduced by G. Roos and W. Yin and it was studied extensively from various aspects (cf. \cite{AP}, \cite{L}, \cite{Roos}, \cite{Y-CR}). Besides, since $N(z,z)=1-|z|^2$ when $\mathscr{D}$ is the unit disc $\mathbb D$, the Cartan-Hartogs domain includes the Thullen domain 
\[
D_\mu =\{(z,\zeta)\in \mathbb D \times \mathbb C:|z|^2 + |\zeta|^{2/\mu} <1\}
\] as a special case. The automorphism group of the Cartan-Hartogs domain is computed in \cite{ABP}.
\begin{theorem}
The automorphism group of the Cartan-Hartogs domain is generated by the mappings $\Phi$ of the forms:
\begin{align*}
\Phi(z,\zeta)= (\Phi_1 (z,\zeta), \Phi_2 (z,\zeta) ), 
\quad \mbox{for } (z,\zeta)\in\Omega^{\mathscr{D}}_N,
\end{align*}
where $\Phi_1 (z,\zeta)= \phi(z)\in \mbox{Aut}(\mathscr{D})$ and
\begin{align*}
\Phi_2(z,\zeta)= U\cdot \dfrac{N(z_0,z_0)^{\frac{\mu}{2}} }{N(z,z_0)^{\mu} } 
\cdot \zeta, \quad z_0=\phi^{-1}(0), \quad U\in U(m).
\end{align*}
\end{theorem}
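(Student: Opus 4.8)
The plan is to establish the two inclusions separately: first that every map $\Phi$ of the stated form belongs to $\mathrm{Aut}(\Omega^{\mathscr D}_N)$, and then that these exhaust the automorphism group.

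For the sufficiency I would check that $\Phi$ preserves the defining inequality $\|\zeta\|^2 < N(z,z)^\mu$, the only nontrivial ingredient being the behaviour of the generic norm under $\phi\in\mathrm{Aut}(\mathscr D)$. Combining the invariance $K_{\mathscr D}(\phi(z),\phi(z))\,|J_\phi(z)|^2 = K_{\mathscr D}(z,z)$ with $K_{\mathscr D}(z,z)=\mathrm{const}\cdot N(z,z)^{-g}$ and the standard addition formula for $N$ yields the transformation rule
\begin{align*}
N(\phi(z),\phi(z))=\frac{N(z_0,z_0)\,N(z,z)}{|N(z,z_0)|^{2}},\qquad z_0=\phi^{-1}(0).
\end{align*}
Since $U$ is unitary, $\|\Phi_2(z,\zeta)\|^2 = N(z_0,z_0)^\mu |N(z,z_0)|^{-2\mu}\|\zeta\|^2$, and substituting the rule above shows that $\|\Phi_2\|^2<N(\Phi_1,\Phi_1)^\mu$ holds exactly when $\|\zeta\|^2<N(z,z)^\mu$; the same computation reads off $z_0$ and the normalizing factor and reproduces the stated formula.

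For the necessity I would exploit that $\mathscr D$ is balanced, so $N(z,z)$ is invariant under $z\mapsto e^{i\theta}z$ and $\Omega^{\mathscr D}_N$ is a bounded domain that is circular about the center $(0,0)$. Let $G$ be the group generated by the stated maps; its orbit of $(0,0)$ is exactly $\mathscr D\times\{0\}$, and its isotropy at $(0,0)$ is $\{(z,\zeta)\mapsto(kz,U\zeta):k\in K,\ U\in U(m)\}$, where $K$ is the linear isotropy of $0$ in $\mathrm{Aut}(\mathscr D)$. The three steps are then: (i) show that $\mathscr D\times\{0\}$ is invariant under all of $\mathrm{Aut}(\Omega^{\mathscr D}_N)$, so that after composing an arbitrary $F$ with a suitable element of $G$ we may assume $F(0,0)=(0,0)$; (ii) invoke H.\ Cartan's theorem for bounded circular domains to deduce that such an $F$ is the restriction of a linear map; and (iii) classify the linear maps preserving $\Omega^{\mathscr D}_N$, a direct computation identifying them with $K\times U(m)$, i.e.\ with the isotropy already present in $G$. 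Assembling (i)--(iii) gives $\mathrm{Aut}(\Omega^{\mathscr D}_N)=G$.

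The hard part is step (i), the invariance of the central section $\mathscr D\times\{0\}$, and this is exactly where the non-homogeneity of $\Omega^{\mathscr D}_N$ must be used: for the exceptional parameter---say the Thullen domain with $\mu=1$, which is the unit ball $\mathbb B^{n+m}$---the domain is homogeneous, $\mathscr D\times\{0\}$ is not invariant, and the statement fails, so the argument cannot be purely formal and the dependence on $\mu$ must be kept explicit in (i) and (iii). My primary approach to (i) is intrinsic: characterize $\mathscr D\times\{0\}$ as the locus of maximal isotropy, since by conjugating the subgroup $U(m)\subset\mathrm{Aut}(\Omega^{\mathscr D}_N)$ fixing $(0,0)$ every point $(z_0,0)$ carries a $U(m)$ in its stabilizer, whereas a point with $\zeta\neq 0$ does not; as $\mathrm{Aut}(\Omega^{\mathscr D}_N)$ is a Lie group acting properly on the bounded domain, comparison of orbit and isotropy dimensions then forces automorphisms, which conjugate stabilizers, to preserve this locus. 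A complementary, more computational route uses the explicit Bergman kernel: by \eqref{FRC} together with the weighted Bergman kernel of $\mathscr D$ computed in Section~\ref{pre}, $K_{\Omega^{\mathscr D}_N}$ is an explicit function of $N(z,z')$ and $\langle\zeta,\zeta'\rangle$, whose restriction \eqref{restFRC} to $\zeta=\zeta'=0$ returns the base kernel; feeding this into the transformation law \eqref{trans} turns the invariance of the Bergman metric into constraints that single out $\mathscr D\times\{0\}$. Finally, as a cross-check in the spirit of Remark~\ref{PL_condition}, once the fibration is known to be preserved one may take $F(z,\zeta)=(z,F_2(z,\zeta))$ and observe that $a(z):=F_2(z,0)$ satisfies $\|a(z)\|^2<N(z,z)^\mu\to 0$ as $z\to\partial\mathscr D$, so the maximum principle forces $a\equiv 0$ and hence $F_2(z,\zeta)=U\zeta$ with $U\in U(m)$.
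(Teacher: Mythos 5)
First, a point of orientation: the paper contains no proof of this statement --- it is imported from \cite{ABP} (``The automorphism group of the Cartan-Hartogs domain is computed in \cite{ABP}''), so there is no internal argument to compare yours with line by line. Judged on its own merits, your sufficiency direction is essentially complete: the polarized identity $N(\phi(z),\phi(w))N(z,z_0)N(z_0,w)=N(z_0,z_0)N(z,w)$ is a standard Jordan-triple fact (of which the paper's quoted relation $|\det J(\phi,z_0)|^2=N(z_0,z_0)^{-g}$ from \cite{D} is the diagonal shadow), and your equivalence computation, together with the observation that $\Phi^{-1}$ has the same form, does show $\Phi\in\mathrm{Aut}(\Omega^{\mathscr D}_N)$. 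Your identification of the exceptional case is also correct and important: for $\mathscr D$ a ball and $\mu=1$ the domain is a ball, the generated group is a proper subgroup, and the theorem as transcribed in this paper silently omits the exclusion that \cite{ABP} imposes. That any proof of necessity must ``see'' $\mu$ is a real insight.

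The genuine gap is exactly where you locate the difficulty, step (i), and your primary route to it is circular. You assert that a point with $\zeta\neq 0$ carries no subgroup isomorphic to $U(m)$ in its stabilizer --- but this is a statement about the unknown group $\mathrm{Aut}(\Omega^{\mathscr D}_N)$, which is what you are trying to determine. The tools you invoke do not supply it: Cartan's theorems give that the isotropy group of any point of a bounded domain is compact with faithful linear isotropy representation, which merely embeds the stabilizer into $GL(n+m,\mathbb C)$ and in no way precludes a $U(m)$ sitting over a point off the central section; ``comparison of orbit and isotropy dimensions'' likewise has no purchase until the orbit structure is known, which again presupposes (i). Your complementary Bergman-kernel route is the one that can actually be made to work --- and it is in the spirit of \cite{ABP}, which leans on the explicit kernels of \cite{AP} --- but as written it is only a gesture: you would need to extract from \eqref{FRC} and \eqref{trans} a concrete $\mathrm{Aut}$-invariant function (for instance $K_{\Omega^{\mathscr D}_N}(p,p)$ normalized against an invariant volume, or the degeneration rate of the kernel at the boundary stratum $\partial\mathscr D\times\{0\}$) and verify that its distinguished level set is precisely $\mathscr D\times\{0\}$ for every non-exceptional $\mu$; none of that computation is carried out, and it is where all the $\mu$-dependence you rightly demand must enter. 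Route C does not rescue anything, since it presupposes both (i) and fiber preservation, and moreover silently assumes $F_2(z,\zeta)$ is $\zeta$-linear with constant unitary part. Finally, step (iii) --- ruling out linear automorphisms mixing $z$ and $\zeta$ when $(\mathscr D,\mu)$ is not the ball case --- is also deferred to ``a direct computation'' that is nontrivial (it must fail for $\mathbb B^{n+m}$, where mixing unitaries exist). So: sufficiency sound, architecture (reduce to an origin-fixing map, linearize by H.~Cartan, classify the linear part) reasonable and consistent with the literature, but the necessity argument has an unfilled hole at its acknowledged crux.
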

We will see an analogue of Theorem \ref{main1} for the Cartan-Hartogs domain.
Let $q$ be an admissible weight of $\mathscr D$ and put $\mathscr {H}'_m=A^2(\mathscr D, q^m)$.
Let us consider the following Hartogs domain
\[
 D_q =\{(z,\zeta) \in \mathscr D \times \mathbb C^n: \|\zeta\|^2 < q(z)\}.
\]
Throughout this section, we assume that $\mbox{Aut}(D_q)$ contains the mapping $\Phi$.
The purpose of this section is to show that the Cartan-Hartogs domain is the unique Hartogs domain $D_q$ whose automorphism group contains the mapping $\Phi$.\par
By a simple computation, we see that the Jacobian determinant $J(\Phi, (z,0))$ is given by
\begin{align*}
J(\Phi, (z,0))&=
\det\begin{pmatrix}
J(\phi, z) & O\\
B & \dfrac{N(z_0,z_0)^{\frac{\mu}{2}} }{N(z,z_0)^{\mu} } \cdot U
\end{pmatrix}\\
&=\dfrac{N(z_0,z_0)^{\frac{m\mu}{2}} }{N(z,z_0)^{m\mu} } \det J(\phi, z) ,
\end{align*}
where  $B \in \mbox{Mat}_{m\times n} (\mathbb C)$.
Then it is known that $\det J(\phi,z_0)$ satisfies the following relation (cf. \cite[Lemme 1]{D}):
\begin{align*}
|\det J(\phi,z_0)|^2 = N(z_0,z_0)^{-g}.
\end{align*}
Then, by an argument similar to that in Section \ref{sec:Fock}, we deduce
\begin{align*}
K_{D_q}((z,0),(z,0))&=\left| \dfrac{N(z_0,z_0)^{\frac{m\mu}{2}}}{N(z,z_0)^{m\mu}}\right|^2|\det J(\phi,z)|^2  K_{D_q} ((\phi(z),0),(\phi(z),0)).
\end{align*}
Putting $z=z_0$, then we obtain
\begin{align}\label{relation}
K_{D, q^m}(z_0,z_0)&= K_{D, q^m}(0,0) N(z_0,z_0)^{-m\mu-g}\\
&=K_{D, q^m}(0,0) \widetilde{K}_{m\mu} (z_0,z_0) . \notag
\end{align}
Since every irreducible bounded symmetric domain $\mathscr D$ is homogeneous, there exists $f \in \mbox{Aut}(\mathscr D)$ with $f(z)=0$ for any $z \in \mathscr D$.
Thus the equation \eqref{relation} holds for any $z \in \mathscr D$.
Hence, by using the same argument as that in Section \ref{sec:Fock}, we obtain the following lemma:
\begin{lemma}
Suppose that $\mbox{Aut}(D_{q})$ contains the mapping $\Phi$.
Then we have the following:
\begin{itemize}
\item[(a)] The weighted Bergman kernel $K_{\mathbb C^n,q^m}$ coincides with  $\widetilde{K}_{m\mu}$ (up to a constant multiple);
\item[(b)] The space $\mathscr{H}'_m$ coincides with the $\mathscr S_{m\mu}^{2}$.
\end{itemize}
\end{lemma}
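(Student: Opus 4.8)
The plan is to follow, almost verbatim, the argument used for Lemma~\ref{keylemma} in Section~\ref{sec:Fock}, with the Fock--Bargmann kernel replaced by $\widetilde K_{m\mu}(z,w)=N(z,w)^{-g-m\mu}$. The substantive computation is already complete: the Jacobian of $\Phi$, the identity $|\det J(\phi,z_0)|^2=N(z_0,z_0)^{-g}$, and the transformation law~\eqref{trans} together with the Forelli--Rudin restriction~\eqref{restFRC} produce the diagonal relation~\eqref{relation}, and the homogeneity of $\mathscr D$ promotes it from the single point $z_0=\phi^{-1}(0)$ to all of $\mathscr D$. Thus, at the outset of the proof one already has the diagonal identity $K_{\mathbb C^n,q^m}(z,z)=K_{\mathbb C^n,q^m}(0,0)\,\widetilde K_{m\mu}(z,z)$ for every $z\in\mathscr D$, where $K_{\mathbb C^n,q^m}$ denotes the weighted Bergman kernel of the base $\mathscr D$ appearing in~\eqref{relation}.

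For part~(a) I would upgrade this diagonal identity to a full identity of kernels. Both $K_{\mathbb C^n,q^m}$ and $\widetilde K_{m\mu}$ are, via the orthonormal-basis expansion $\sum_k e_k(z)\overline{e_k(w)}$, holomorphic in $z$ and anti-holomorphic in $w$; since $\mathscr D$ is a connected domain, two such sesqui-holomorphic functions that agree on the totally real diagonal must agree on all of $\mathscr D\times\mathscr D$, by polarization. Hence $K_{\mathbb C^n,q^m}(z,w)=c\,\widetilde K_{m\mu}(z,w)$ with $c=K_{\mathbb C^n,q^m}(0,0)$, and since $K_{\mathbb C^n,q^m}\not\equiv0$ the constant $c$ is non-zero. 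This is precisely statement~(a).

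For part~(b), the implication $(\mathrm a)\Rightarrow(\mathrm b)$, I would invoke the uniqueness of a reproducing kernel Hilbert space determined by its kernel. Since $\widetilde K_{m\mu}$ is the reproducing kernel of $\mathscr S_{m\mu}^2$ (cf.~\cite[Lemme~1]{D}) and, by~(a), the reproducing kernel $K_{\mathbb C^n,q^m}$ of $\mathscr H'_m$ equals $c\,\widetilde K_{m\mu}$, the two kernels coincide up to the non-zero scalar $c$; the Moore--Aronszajn correspondence then forces $\mathscr H'_m$ and $\mathscr S_{m\mu}^2$ to consist of exactly the same holomorphic functions, the factor $c$ merely rescaling the inner product. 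As in Lemma~\ref{keylemma}, this step runs along the same lines as in~\cite[p.472]{Zhu}, to which I would refer rather than repeat the details.

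Since the genuinely geometric input---the Jacobian computation, the value of $|\det J(\phi,z_0)|^2$, and the homogeneity argument---is in place before the statement, the only analytic point requiring care is the passage from the diagonal to the full kernel in~(a); this I expect to be the main (though mild) obstacle. Concretely, one must confirm that the diagonal function $N(z,z)^{-g-m\mu}$ is the restriction of the sesqui-holomorphic $N(z,w)^{-g-m\mu}$ on $\mathscr D\times\mathscr D$---valid because $N(z,w)$ is holomorphic in $z$, anti-holomorphic in $w$ and non-vanishing there---so that polarization legitimately yields the full identity. With that in hand, everything else transfers verbatim from the Fock--Bargmann case of Section~\ref{sec:Fock}.
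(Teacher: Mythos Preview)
Your proposal is correct and follows essentially the same route as the paper: the paper also reduces to the diagonal identity~\eqref{relation}, extends it to all of $\mathscr D$ by homogeneity, then invokes ``the same argument as in Section~\ref{sec:Fock}'' (i.e.\ diagonal-determines-kernel for part~(a) and the reference to \cite[p.~472]{Zhu} for part~(b)). Your write-up is in fact more explicit than the paper's at the polarization step and in naming the Moore--Aronszajn/RKHS uniqueness principle behind~(b), but the underlying argument is identical.
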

Combining this lemma with Theorem \ref{thm:D-O}, we obtain the following theorem.
\begin{theorem}\label{main2}
The Cartan-Hartogs domain is the unique Hartogs domain $D_q$ 
which possesses $\Phi$ as an automorphism.
\end{theorem}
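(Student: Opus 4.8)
The plan is to run the same argument as for Theorem~\ref{main1}, with the Fock-Bargmann data replaced by the bounded symmetric domain data. The hypothesis is that $\Phi \in \mbox{Aut}(D_q)$, and the preceding lemma already packages the two consequences I need: the space identification $\mathscr{H}'_m = \mathscr{S}_{m\mu}^2$ and the kernel identity $K_{\mathscr{D}, q^m}(z,w) = c\,\widetilde{K}_{m\mu}(z,w)$ on $\mathscr{D}\times\mathscr{D}$ for some nonzero constant $c$ (indeed $c=K_{\mathscr{D},q^m}(0,0)$, since $\widetilde K_{m\mu}(0,0)=N(0,0)^{-g-m\mu}=1$). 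Thus the theorem reduces to reading off the shape of $q$ from this single kernel identity.

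First I would recast $\widetilde{K}_{m\mu}$ as a bona fide weighted Bergman kernel. By the second example, $\widetilde{K}_{m\mu}(z,w)=N(z,w)^{-g-m\mu}$ is the reproducing kernel of $\mathscr{S}_{m\mu}^2$, whose weight of integration is a constant multiple of $N(z,z)^{m\mu}$; equivalently $\widetilde K_{m\mu}=K_{\mathscr{D},\psi}$ with $\psi(z)=\mbox{const.}\,N(z,z)^{m\mu}$. The kernel identity then reads $K_{\mathscr{D},q^m}=c\,K_{\mathscr{D},\psi}$ on $\mathscr{D}\times\mathscr{D}$.

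Next I would invoke Diederich-Ohsawa. Since the bounded symmetric domain $\mathscr{D}$ is in particular a bounded domain, Theorem~\ref{thm:D-O} applies directly; the inclusion $\mathbb{C}[z]\subset A^2(\mathscr{D},q^m)\cap A^2(\mathscr{D},\psi)$ holds because boundedness of $\mathscr{D}$ makes $N(z,z)^{m\mu}$ bounded (so polynomials are square-integrable against $\psi$), while part (b) of the lemma gives $A^2(\mathscr{D},q^m)=\mathscr{H}'_m=\mathscr{S}_{m\mu}^2\supset\mathbb{C}[z]$. Applying Theorem~\ref{thm:D-O} together with its remark on constant multiples to the admissible weights $q^m$ and $\psi$ yields $q(z)^m=\mbox{const.}\,N(z,z)^{m\mu}$ on $\mathscr{D}$. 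Taking positive $m$-th roots, legitimate since $q$ and $N(z,z)$ are nonnegative, gives $q(z)=\mbox{const.}\,N(z,z)^{\mu}$, which is exactly the defining weight of $\Omega^{\mathscr{D}}_N$; hence $D_q$ coincides with the Cartan-Hartogs domain up to a constant multiple of the weight.

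The one step I would guard most carefully, and the only place with genuine content beyond bookkeeping, is the identification of $\widetilde K_{m\mu}$ with the weighted Bergman kernel of $\mbox{const.}\,N(z,z)^{m\mu}$: one must track the normalization constant $c_{\mathscr{D},m\mu}$ and confirm that the exponent on $N(z,z)$ in the weight is precisely $m\mu$, since any slip there would be carried through the $m$-th root and corrupt the final form of $q$. Once that is pinned down, the remainder mirrors the proof of Theorem~\ref{main1} step for step.
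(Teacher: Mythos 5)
Your proposal is correct and is essentially the paper's own proof: the paper explicitly says Theorem~\ref{main2} ``proceeds along the same lines as the proof of Theorem~\ref{main1}, replacing $\mathbb C[z]\subset\mathscr F_{m\mu}^2$ by $\mathbb C[z]\subset\mathscr S_{m\mu}^2$,'' and your argument fills in exactly those lines --- lemma, kernel identity $K_{\mathscr D,q^m}=c\,\widetilde K_{m\mu}$, Diederich--Ohsawa with the constant-multiple remark, then the $m$-th root. Your careful identification of the weight behind $\widetilde K_{m\mu}$ as $\mbox{const.}\,N(z,z)^{m\mu}$ is in fact the correct reading (the paper's displayed definition of $\mathscr S_\mu^2$ writes $N(z,z)$ where $N(z,z)^{\mu}$ is meant), so your one flagged point of caution is well placed.
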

The proof proceeds along the same lines as the proof of Theorem \ref{main1}, replacing $\mathbb C[z]\subset\mathscr F_{m\mu}^2$ by $\mathbb C[z]\subset\mathscr S_{m\mu}^2$.
To avoid repetition we omit the proof of this theorem.\par
As a special case of Theorem \ref{main2}, we obtain the following (cf. \cite[Theorem 2.3.5]{J-P}):
Let $r$ be an admissible weight of $\mathbb D$. Define $D_{r}=\{(z,\zeta)\in \mathbb D \times \mathbb C: |\zeta|^2 < r(z)\}$ and $\varphi_a: D_{r}\rightarrow \mathbb C^2$ by
$$\varphi_a: D_{r}\rightarrow \mathbb C^2, (z,\zeta) \mapsto  \left( \dfrac{z-a}{1-\overline{a}z},\dfrac{(1-|a|^2)^{\frac{\mu}{2}}}{(1-\overline{a}z)^{\mu}}\zeta\right),$$
where $a\in\mathbb D$. As is well-known, $\varphi_a$ is an automorphism  of the Thullen domain.
\begin{corollary}
The Thullen domain is the unique Hartogs domain $D_{r}$ 
which attains $\varphi_a$ as an automorphism.
\end{corollary}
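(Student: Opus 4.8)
The plan is to read the corollary as the specialization of Theorem~\ref{main2} to the base domain $\mathscr{D}=\mathbb{D}$ with fiber dimension $m=1$, and then to replay the proof of that theorem in this concrete setting. The only genuinely new task is to verify that $\varphi_a$ is nothing but the generator $\Phi$ of $\mathrm{Aut}(\Omega_N^{\mathscr{D}})$ restricted to the disc, so that the hypothesis $\varphi_a\in\mathrm{Aut}(D_r)$ supplies exactly the input required by Theorem~\ref{main2}.

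First I would make the identification explicit. For $\mathscr{D}=\mathbb{D}$ the generic norm is $N(z,w)=1-z\overline{w}$ and the genus is $g=2$, while $\phi(z)=\frac{z-a}{1-\overline{a}z}$ is the M\"obius automorphism with $\phi(a)=0$, so that $z_0=\phi^{-1}(0)=a$. Substituting $N(z_0,z_0)=1-|a|^2$, $N(z,z_0)=1-\overline{a}z$, $m=1$, and $U=1\in U(1)$ into the formula for $\Phi_2$ recovers precisely the second component $\frac{(1-|a|^2)^{\mu/2}}{(1-\overline{a}z)^{\mu}}\zeta$ of $\varphi_a$, and $\Phi_1=\phi$ recovers the first. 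Thus $\varphi_a=\Phi$ with $q=r$, and the hypotheses of Theorem~\ref{main2} are in force.

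With this in hand the computation of Section~\ref{CH_dom} applies line for line. I would record $J(\varphi_a,(z,0))=\phi'(z)\cdot\frac{(1-|a|^2)^{\mu/2}}{(1-\overline{a}z)^{\mu}}$ with $\phi'(z)=\frac{1-|a|^2}{(1-\overline{a}z)^2}$, so that $|\phi'(a)|^2=(1-|a|^2)^{-2}=N(a,a)^{-g}$, invoke the transformation law \eqref{trans} together with the Forelli--Rudin restriction \eqref{restFRC} (which for $m=1$ reads $K_{D_r}((z,0),(z',0))=\frac{1}{\pi}K_{\mathbb{D},r}(z,z')$), and set $z=a$ to arrive at the diagonal identity $K_{\mathbb{D},r}(a,a)=K_{\mathbb{D},r}(0,0)(1-|a|^2)^{-\mu-2}=K_{\mathbb{D},r}(0,0)\widetilde{K}_{\mu}(a,a)$. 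Since $a\in\mathbb{D}$ is arbitrary and the disc is homogeneous, polarization promotes this to $K_{\mathbb{D},r}(z,w)=c\,\widetilde{K}_{\mu}(z,w)$ for a nonzero constant $c$, and identifies $A^2(\mathbb{D},r)$ with $\mathscr{S}_{\mu}^2$.

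Finally, because $\mathbb{C}[z]\subset\mathscr{S}_{\mu}^2=A^2(\mathbb{D},r)$, Theorem~\ref{thm:D-O} together with its remark on constant multiples forces $r(z)=\mbox{const.}\cdot N(z,z)^{\mu}=\mbox{const.}\cdot(1-|z|^2)^{\mu}$; after rescaling $\zeta$ the defining inequality $|\zeta|^2<r(z)$ becomes $|z|^2+|\zeta|^{2/\mu}<1$, i.e. $D_r$ is the Thullen domain. I expect no serious obstacle here: the entire substance is carried by Theorem~\ref{main2}, and the two points warranting attention—that $\varphi_a$ really coincides with $\Phi$ (so that no hypothesis is silently dropped) and that admissibility of $r$ secures the polynomial containment demanded by Diederich--Ohsawa—are both routine.
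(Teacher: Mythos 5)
Your proposal is correct and follows the paper's route exactly: the paper derives this corollary as the specialization of Theorem~\ref{main2} to $\mathscr{D}=\mathbb{D}$, $m=1$ (where $N(z,w)=1-z\overline{w}$, $g=2$, $z_0=a$), which is precisely your identification of $\varphi_a$ with the generator $\Phi$, followed by the same transformation-law computation and the Diederich--Ohsawa theorem. Your explicit verification that $\varphi_a=\Phi$ and that $\mathbb{C}[z]\subset\mathscr{S}_\mu^2=A^2(\mathbb{D},r)$ (note it is this identification of spaces, not admissibility of $r$ alone, that yields the polynomial containment, as you correctly use in the body of your argument) simply spells out details the paper leaves implicit.
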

\section{Concluding remark}\label{C_Rmk}
We conclude this paper with a remark on our argument.\par
Let $\Omega$ be a bounded homogeneous domain and $P$ an admissible weight of $\Omega$.
Consider the Hartogs domain $\Omega_P=\{(z,\zeta)\in\Omega\times\mathbb C^m:\|\zeta\|^2 < P(z)\}$ with the condition $\mathbb C[z]\subset A^2(\Omega, P^m)$.
Suppose that the automorphism group of $\Omega_P$ contains 
\begin{align}\label{A}
A=\left\lbrace \varphi(z,\zeta)=(\varphi_1(z), \varphi_2(z,\zeta)): 
\begin{array}{ll}
\varphi_1 \in \mbox{Aut}(\Omega), \varphi_2(z,0)=0,\\
|J(\varphi,(z,0))|^2=\mbox{const.}K_{\Omega, P^m}(z,z) 
\end{array}
\right\rbrace.
\end{align}
Then the same argument as in the previous section tells us that
$\Omega_P$ is the unique Hartogs domain $\Omega_Q=\{(z,\zeta)\in \Omega \times \mathbb C^m: \|\zeta\|^2 < Q(z)\}$ such that $A\subset \mbox{Aut}(\Omega_Q)$.

Indeed, if $A\subset \mbox{Aut}(\Omega_Q)$, then we have
\begin{align*}
K_{\Omega, Q^m}(z,z)&=|J(\varphi, (z,0))|^2 K_{\Omega, Q^m}(\varphi_1(z),\varphi_1(z))\\
&= \mbox{const.} K_{\Omega, P^m}(z,z) K_{\Omega, Q^m}(\varphi_1(z),\varphi_1(z)).
\end{align*}
Since $\Omega$ is homogeneous, there exists $\widetilde{\varphi}_1 \in \mbox{Aut}(\Omega)$  such that 
$\widetilde{\varphi}_1(z_0)=0$ for any $z_0 \in \Omega$.
This implies that
\[
K_{\Omega, Q^m}(z,z)=
\mbox{const.} K_{\Omega, P^m}(z,z) K_{\Omega, Q^m}(0,0).
\]
Moreover, we can show that
\begin{align*}
&K_{\Omega,P^m}(z,w)=\mbox{const.}K_{\Omega,Q^m}(z,w);\\
& A^2(\Omega, P^m) = A^2(\Omega, Q^m).
\end{align*}
Using Theorem \ref{thm:D-O} in conjunction with the condition $\mathbb C[z] \subset A^2(\Omega, P^m)$,
we thus prove that $P= \mbox{const.} Q$.\par
As we have seen in this paper, the Cartan-Hartogs domain is an example whose automorphism group contains a subset of the form \eqref{A}.
We do not know such examples if the base domain of a Hartogs domain is a non-symmetric homogeneous domain.
It would be interesting to study which kind of condition ensures the existence of a subset of the form \eqref{A}.

\acknowledgement{The authors would like to express their gratitude to Professors Hong Rae Cho, Kang-Tae Kim and Takeo Ohsawa for sincere and helpful discussion. The authors also extend their thanks to Jae-Cheon Joo and Van Thu Ninh for their valuable comments on this paper.}

\end{document}